\documentclass[a4paper,12pt]{article}
\usepackage{amsmath}
\usepackage{amsthm}
\usepackage{amssymb}
\usepackage{mathtools}
\usepackage{epsfig}
\usepackage{hyperref}
\usepackage{color}
\usepackage{cite}
\newtheorem{theo}{Theorem}
\newtheorem{lemma}[theo]{Lemma}

\newtheorem{cor}[theo]{Corollary}

\newtheorem*{problem*}{Problem}

\newcommand{\NN}{\mathbb{N}}

\newcommand{\LL}{{\mathcal L}}

\begin{document}
\title{Fundamental cycles in grid graphs\thanks{The first, second and fourth authors were supported by the Alexander von Humboldt Foundation in the framework of the Alexander von Humboldt Professorship of the second author endowed by the Federal Ministry of Education and Research.}}
\author{Bart\l{}omiej Kielak\thanks{Institute of Mathematics, Leipzig University, Augustusplatz 10, 04109 Leipzig, Germany. E-mail: {\tt bartlomiej.kielak@uni-leipzig.de}.}\and
        \newcounter{lth}
        \setcounter{lth}{2}
        Daniel Kr\'al'\thanks{Institute of Mathematics, Leipzig University, Augustusplatz 10, 04109 Leipzig, and Max Planck Institute for Mathematics in the Sciences, Inselstra{\ss}e 22, 04103 Leipzig, Germany. E-mail: {\tt daniel.kral@uni-leipzig.de}.}\and
	Ander Lamaison\thanks{Universidad P\'ublica de Navarra, Edificio Las Encinas, Campus de Arrosad\'ia, 31006 Pamplona, Spain. E-mail: {\tt ander.lamaison@unavarra.es}.}\and
        Xichao Shu$^\fnsymbol{lth}$}

\date{}
\maketitle

\begin{abstract}
We show that the average length of a fundamental cycle with respect to any fixed spanning tree of the $n\times n$ square grid
is at least $\Omega(\log n)$; the bound is asymptotically tight.
This result answers in the affirmative a question posed by McCarty in relation to sparse representations of binary matroids.
\end{abstract}

\section{Introduction}
\label{sec:intro}

Sparsity is an important concept related to the design of efficient algorithms.
In the graph setting,
the recently emerged notions of classes with bounded expansion and nowhere-dense classes of graphs
provided a unified view~\cite{DvoKT10,DvoKT13,GroKS14,GroKS17} on classical algorithmic results such as
Courcelle's MSO property testing for graphs with bounded tree-width~\cite{Cou90} and
other meta-algorithmic results for sparse graphs~\cite{CouMR00,FriG99,FriG01,See96}.
We refer to the monograph by Ne{\v{s}}etril and Ossona de Mendez~\cite{NesO12} for a comprehensive treatment of the two notions.
In the matroid setting, the notion of branch-width
led to similarly impactful algorithmic results~\cite{GavKO12,Hli03a,Hli03b,Hli06,HliO07,HliO08,JeoKO18}.

Sparse representations of matroids turned out to be of importance in relation to combinatorial optimization.
Empirical results on efficient algorithms for integer programs possessing a block structure~\cite{BorFM98,BerCCFLMT15,KhaEE18,FerH98,AykPC04,WeiK71,WanR13,GamL10,VanW10}
were complemented by theoretical results on $2$-stage integer programs due to Hemmecke and Schultz~\cite{HemS03},
which were further investigated in~\cite{AscH07,Kle21,CslEPVW21,JanKL21,KleR21,KouLO18}, and
$n$-fold integer programs introduced by De Loera et al.~\cite{DHOW} and
further studied in~\cite{HemOR13,CheM18,EisHK18,JanLR20,CslEHRW21,KouLO18}.

The above mentioned tractability results on integer programs with sparse constraint matrices
were unified and generalized using depth parameters of graphs derived from constraint matrices.
Ganian and Ordyniak~\cite{GanO18} showed that integer programs with bounded primal tree-depth of the constraint matrix and
bounded right side can be solved efficiently, and
Kouteck\'y, Levin and Onn~\cite{KouLO18} widely generalized this result to
integer programs with bounded primal or dual tree-depth of the constraint matrix
while dropping the condition on the right side.
Bria\'nski et al.~\cite{BriKKPS22,BriKKPS24} and Cooper et al.~\cite{ChaCKKP19,ChaCKKP20}
related the minimum possible primal and dual tree-depth of the constraint matrix of an integer program
equivalent to the input integer program to depth parameters of the column matroid of the constraint matrix (this
matroid is invariant under elementary row operations) and
provided algorithmic results yielding preconditioners for integer programming.

These developments sparked interest in the study of sparse representations of matroids,
which was also the subject of a blog post by McCarty~\cite{McC24}
entitled \emph{Sparse representations of binary matroids}.
Specifically, McCarty defined the \emph{sparsity} of a binary matroid $M$
as the minimum number of $1$'s in the binary representation of $M$.
There are two possible variants that one may consider:
the minimum can be taken over all representations of $M$ or
it can be taken only over those in the echelon form,
i.e.~those where one of the bases of $M$ is represented by unit vectors.
Observe that a graphic matroid has a representation in the echelon form such that
the average support of a vector in the representation is bounded
if and only if
there exists a spanning tree such that the average length of a fundamental cycle is bounded.
McCarty~\cite{McC24} suspected that the graphic matroids associated with grid graphs
did not enjoy this property.

\begin{problem*}[{McCarty~\cite[Problem 2]{McC24}}]
Is it true that for each integer $k$,
there exists an integer $n=n(k)$ so that if $T$ is any spanning tree of the $n\times n$ grid,
then the average length of a fundamental cycle with respect to $T$ is at least $k$?
\end{problem*}

We answer this problem in the affirmative by showing that $n(k)=2^{\Theta(k)}$.
Specifically,
we show that the average length of a fundamental cycle with respect to any spanning tree of the $n\times n$ grid
is at least $\frac{2}{625}\left\lfloor\log_5 n\right\rfloor$ (Corollary~\ref{cor:lower}).
We complement this result by constructing a spanning tree $T$ of an $n\times n$ such that
with the average length of a fundamental cycle with respect to $T$ at most $40\log_2 n$ (Corollary~\ref{cor:upper}).
We remark that in the arguments presented in this paper,
we focus on establishing the tight asymptotic behavior
without an attempt to optimize the multiplicative constants in our results.

\section{Notation}
\label{sec:not}

We now fix basic notation used throughout the paper.
Additional notation, which is used only in Section~\ref{sec:lower},
is introduced in the beginning of that section.

The set of the first $k$ positive integers is denoted by $[k]$.
A grid is the Cartesian product of two paths, and
the grid that is the product of two $n$-vertex paths is denoted by $G_{n,n}$ and
it is referred to as the \emph{$n$-grid}.
If $n$ is not important or clear from the context, we will simply say a grid.
The vertices of a grid with degree less than four are referred to as \emph{peripheral}.
We think of grids as embedded in the plane with faces being axis parallel unit squares;
in particular, peripheral vertices are exactly those incident with the outer face.
The paths of a grid that are horizontal in this embedding are referred to as \emph{rows} and
those that are vertical as \emph{columns}.

If $T$ is a spanning tree of a graph $G$ and $e$ is an edge of $G$ not contained in $T$,
then the \emph{fundamental cycle} associated with $e$ is the cycle formed by the edge $e$ and
the unique path of $T$ joining the two end vertices of $e$.
Finally,
we write $\LL(T,G)$ for the sum of the lengths of the fundamental cycles associated with all edges not contained in $T$.

\section{Upper bound}
\label{sec:upper}

In this section, we construct a spanning tree of a grid such that
the average length of a fundamental cycle is logarithmic in the size of the grid.

\begin{theo}
\label{thm:upper}
For every $n\in\NN$,
the $n$-grid $G_{n,n}$ has a spanning tree $T$ such that
$\LL(T,G_{n,n})\le 10n^2\log_2 n$.
\end{theo}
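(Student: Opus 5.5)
We construct $T$ recursively, in the style of a nested dissection (an "H-tree" or comb-of-combs). To make the recursion clean, we first treat $n=2^k$ and then handle general $n$ by monotonicity or padding. For general $n$ we embed $G_{n,n}$ into $G_{m,m}$ with $m=2^{\lceil\log_2 n\rceil}<2n$. We could restrict a tree, but restricting a spanning tree to a subgrid need not give a tree. So we intend instead to run the construction directly with a split into $\lfloor n/2\rfloor$ and $\lceil n/2\rceil$ parts. Since the constant $10$ is generous, the uneven split costs only constants.

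\emph{Construction.} We build a spanning tree $T_n$ of $G_{n,n}$ with the following property: the tree path between any two vertices of the square has length at most $c\,n$, because every vertex is joined to a fixed "spine" by a tree path of length $O(n)$. Split the square into four quadrants of side about $n/2$ and take a tree $T_{n/2}$ recursively in each quadrant. We connect the four quadrant trees by three grid edges crossing the two dividing lines near the centre. To keep distances linear, we make the recursion geometric: the root of each quadrant tree is placed at the corner adjacent to the centre. The depth of a vertex then satisfies $d(n)\le d(n/2)+O(1)$ along the route to the centre, so every vertex reaches the global centre within $O(n)$ steps. Concretely, the root-to-vertex distance is bounded by the $\ell_1$ distance plus lower-order terms, giving a bound like $2n$.

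\emph{Counting.} The non-tree edges of $G_{n,n}$ are of two kinds. The first kind lies inside a quadrant; those are accounted for recursively. The second kind crosses one of the two dividing lines. There are about $2n$ crossing edges, minus the three used in $T$. The fundamental cycle of a crossing edge has length at most the diameter of $T_n$ plus one, which is $O(n)$ by the previous paragraph, say at most $4n+1$. This gives the recursion
\[
\LL(T_n,G_{n,n})\le 4\,\LL(T_{n/2},G_{n/2,n/2})+2n\,(4n+1),
\]
which solves to $\LL(T_n,G_{n,n})\le C n^2\log_2 n$. Each of the $\log_2 n$ levels contributes $4^i\cdot O((n/2^i)^2)=O(n^2)$.

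\emph{Main obstacle.} The delicate step is the diameter bound: we must ensure that the tree distance within $T_n$ is $O(n)$ with a constant small enough that the total constant stays below $10$. A naive estimate gives diameter $\le 2\cdot\sum_i 2\cdot n/2^i=4n$. Then a crossing edge contributes at most $4n+1$, the level-$i$ total is $\approx 2n\cdot 4n/ 2^{i}\cdot 2^{i}$, i.e.\ $8n^2$ per level, and we get about $8n^2\log_2 n$. This fits under $10n^2\log_2 n$ if the additive lower-order terms and the rounding for odd $n$ are absorbed. I would verify this by proving by induction the explicit bound $\LL(T_n,G_{n,n})\le 10n^2\log_2 n$ alongside the invariant that every vertex lies within tree distance $n$ (or $2n$) of a designated corner root. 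In the induction, the base cases $n\le 2$ are checked by hand.
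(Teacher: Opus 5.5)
\textbf{Depth recursion.} For $n=2^k$ your plan works and follows the paper's scheme: corner-rooted trees of linear depth, and $O(n)$ edges between blocks, each closing a cycle of length $O(n)$, hence $O(n^2)$ per level. However, the depth analysis is wrong as stated. The root of $T_n$ must itself be an outer corner, so that it can be reflected into the centre-adjacent position one level up. A tree path from the root therefore first runs through its own quadrant tree from that corner to the quadrant root at the centre, then along at most three connecting edges, then through a second quadrant tree. So the correct recursion is $D(n)\le 2D(n/2)+3$, not $d(n)\le d(n/2)+O(1)$. The latter would give logarithmic depth, which is impossible because tree distance dominates grid distance. The construction you describe also has no ``spine'' and no ``$\ell_1$ distance plus lower-order terms''. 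This part is repairable: $D(n)\le 3(n-1)$ follows, every crossing edge closes a cycle of length at most $2D(n/2)+4\le 3n-2$, and your recurrence becomes $\LL(T_n,G_{n,n})\le 4\LL(T_{n/2},G_{n/2,n/2})+6n^2$. This gives $6n^2\log_2 n$ for powers of two.

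\textbf{General $n$.} The genuine gap is the case of general $n$, which the statement requires. Splitting into $\lfloor n/2\rfloor$ and $\lceil n/2\rceil$ produces $\lfloor n/2\rfloor\times\lceil n/2\rceil$ rectangles, so your induction hypothesis about square grids no longer applies. Both the depth invariant and the bound must be reformulated for near-square rectangles, e.g.\ $\LL\le 10ab\log_2\max(a,b)$ for an $a\times b$ grid with $|a-b|\le 1$. The rounding is precisely where the constant is decided. Iterating $D(m)\le 2D(\lceil m/2\rceil)+3$ only certifies $D(m)\le 3\cdot 2^{\lceil\log_2 m\rceil}-3$, which is about $6m$ when $m=2^j+1$. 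The crossing term your argument can then certify is about $12m^2$ per level, and the constant $10$ is lost. You can repair this, for instance by rooting the tree at the outer corner of the $\lfloor a/2\rfloor\times\lfloor b/2\rfloor$ piece, proving $D(a,b)\le\frac32(a+b)-3$, and then checking small cases. As written, though, ``the uneven split costs only constants'' is exactly the unproved step.

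\textbf{How the paper avoids this.} The paper avoids rounding altogether by keeping explicit separator paths. For odd $n$ these are the bottom row and the central column, leaving four squares of side $(n-1)/2$. For even $n$ it uses an L-shaped path, leaving three squares of side $n/2$ and one of side $n/2-1$. All blocks stay square, the invariant that every vertex is within tree distance $2(n-1)$ of the corner root propagates exactly, and the crossing contribution is at most $10n^2$ directly.
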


\begin{proof}
For every $n\in\NN$, we construct a spanning tree $T_n$ of the $n$-grid $G_{n,n}$
with the property given in the statement of the theorem.
The tree $T_1$ consists of the single vertex of $G_{1,1}$ and
the trees $T_2$ and $T_3$ are depicted in Figure~\ref{fig:upper23}.
Each tree $T_n$ will have the right most vertex in the bottom row distinguished as the root.

\begin{figure}
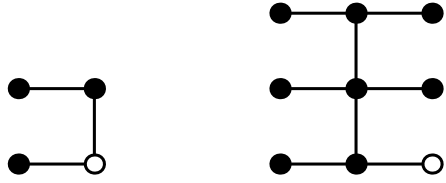

\begin{center}
\epsfbox{matgrid-1.mps}
\hskip 2cm
\epsfbox{matgrid-2.mps}
\end{center}
\caption{The spanning trees $T_2$ and $T_3$ constructed in the proof of Theorem~\ref{thm:upper};
         the positions of the vertices in the figure determine the trees.
         The root vertex is depicted by the empty circle and the edges not contained in the tree are omitted.}
\label{fig:upper23}
\end{figure}

We now present the recursive construction for $n\ge 4$.
The construction is illustrated in Figure~\ref{fig:upper-gen}.
If $n$ is odd, the tree $T_n$ consists of
the edges forming the bottom path of the grid,
the edges forming the central vertical path, and
four copies of $T_{(n-1)/2}$ tiling the rest in the grid.
The two copies of $T_{(n-1)/2}$ to the right from the horizontal path are flipped
so that their root vertices are adjacent to vertices of the central vertical path, and
the tree $T_n$ also contains the four edges joining the root vertices of the copies of $T_{(n-1)/2}$
to the neighboring vertices of the central vertical path.
The unique way of creating the tree $T_n$ in the described way is depicted in Figure~\ref{fig:upper-gen}.
If $n$ is even, the tree $T_n$ consists of
the edges forming the bottom path from its middle to the right,
the edges forming the central vertical path from the bottom to its middle,
three copies of $T_{n/2}$, one of them flipped, and a flipped copy of $T_{(n-2)/2}$ placed in a way that
the root vertex of the flipped copy of $T_{n/2}$ is on the central vertical path and
the root vertices of the remaining copies can be joined by an edge to a vertex on the central path.
Again, the unique way of creating the tree $T_n$ in the described way is depicted in Figure~\ref{fig:upper-gen}.
In addition to the general illustration provided in Figure~\ref{fig:upper-gen},
the trees $T_4$ and $T_5$ are depicted in Figure~\ref{fig:upper45}.

\begin{figure}
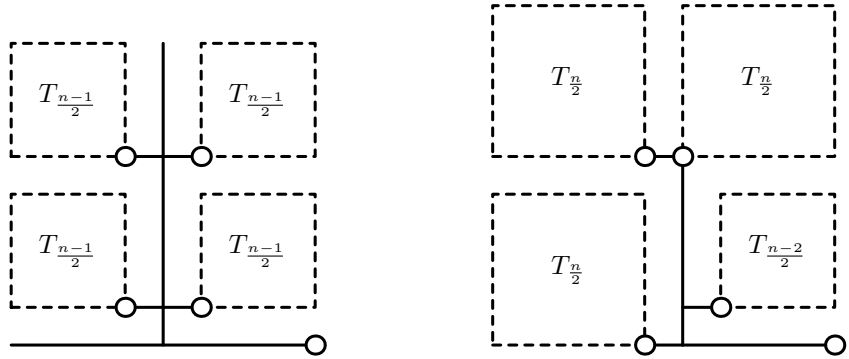

\begin{center}
\epsfbox{matgrid-3.mps}
\hskip 2cm
\epsfbox{matgrid-4.mps}
\end{center}
\caption{Illustration of the construction of the spanning tree $T_n$ when $n$ is odd (in the left) and when $n$ is even (in the right) for a general value of $n$ in the proof of Theorem~\ref{thm:upper}.}
\label{fig:upper-gen}
\end{figure}

\begin{figure}
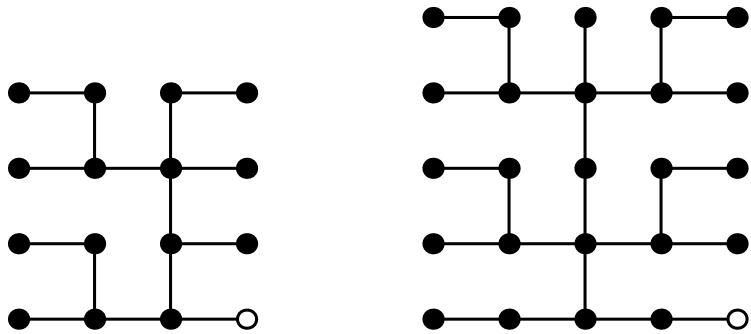

\begin{center}
\epsfbox{matgrid-5.mps}
\hskip 2cm
\epsfbox{matgrid-6.mps}
\end{center}
\caption{The spanning trees $T_4$ and $T_5$ constructed in the proof of Theorem~\ref{thm:upper}. 
         the positions of the vertices in the figure determine the trees.
         The root vertex is depicted by the empty circle and the edges not contained in the tree are omitted.}
\label{fig:upper45}
\end{figure}

Observe that the following holds for every $n\in\NN$:
the distance between the root vertex of $T_n$ and any vertex of $T_n$ is at most $2(n-1)$.
We can now estimate $\LL(T_n,G_{n,n})$.
A direct computation yields that $L_1=0$, $L_2=4$, $L_3=16$ and $L_4=33$ and
so the estimate claimed in the statement of the theorem holds when $n\in\{1,2,3,4\}$.
In the general case $n\ge 5$,
we can upper bound the value of $L_n$ as the sum of the lengths of fundamental cycles lying fully
within the recursive blocks $T_{\frac{n-2}{2}}$, $T_{\frac{n-1}{2}}$ and $T_{\frac{n}{2}}$ and
the sum of the lengths of fundamental cycles associated with edges that are not within the same block.
If $n$ is odd, there are exactly $4n-8$ such edges and
the length of the fundamental cycle associated with any of them is at most $\frac{5n-7}{2}$, and
so the sum of the lengths of fundamental cycles associated with edges that are not within the same block
is at most $10n^2$.
If $n$ is even, there are exactly $3n-6$ such edges and
the length of the fundamental cycle associated with any of them is at most $\frac{5n-2}{2}$, and
so the sum of the lengths of fundamental cycles associated with edges that are not within the same block
is at most $15n^2/2\le 10n^2$.

By induction, the sum of the lengths of fundamental cycles lying fully within the same recursive block is at most
\[10\left(\frac{n}{2}\right)^2\log_2\frac{n}{2},\]
we obtain that
\begin{align*}
\LL(T_n,G_{n,n}) & \le 4\cdot 10\left(\frac{n}{2}\right)^2\log_2\frac{n}{2}+10n^2 \\
                 & = 10n^2\log_2\frac{n}{2}+10n^2=10n^2\log_2 n.
\end{align*}
The proof of the theorem is now completed.
\end{proof}

Theorem~\ref{thm:upper} yields the following:

\begin{cor}
\label{cor:upper}
For every $n\ge 2$,
the $n$-grid $G_{n,n}$ has a spanning tree $T$ such that
the average length of a fundamental cycle with respect to $T$ is at most $40\log_2 n$.
\end{cor}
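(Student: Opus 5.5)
The plan is to read the corollary off Theorem~\ref{thm:upper} by dividing the total length by the number of non-tree edges. Fix $n\ge 2$ and let $T$ be the spanning tree of $G_{n,n}$ given by Theorem~\ref{thm:upper}, so that $\LL(T,G_{n,n})\le 10n^2\log_2 n$. The average length of a fundamental cycle is $\LL(T,G_{n,n})$ divided by the number of edges of $G_{n,n}$ not in $T$. It therefore suffices to bound this number from below by $n^2/4$.

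Next I count the edges. The grid $G_{n,n}$ has $n$ rows and $n$ columns, each a path with $n-1$ edges, so it has $2n(n-1)$ edges. Since $T$ is a spanning tree on $n^2$ vertices, it has $n^2-1$ edges. Hence the number of non-tree edges is
\[2n(n-1)-(n^2-1)=n^2-2n+1=(n-1)^2.\]
This equals the number of bounded faces, as Euler's formula predicts. For $n\ge 2$ we have $n-1\ge n/2$, so $(n-1)^2\ge n^2/4$.

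Combining the two steps, the average is at most
\[\frac{10n^2\log_2 n}{(n-1)^2}\le\frac{10n^2\log_2 n}{n^2/4}=40\log_2 n,\]
which is the claimed bound.

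The argument is routine. The only point that needs care is the restriction $n\ge 2$. It guarantees that $(n-1)^2>0$, so the average is defined, and it gives the inequality $n-1\ge n/2$. Both hold exactly when $n\ge 2$.
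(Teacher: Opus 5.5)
Your proposal is correct and matches the paper's proof: you take the tree from Theorem~\ref{thm:upper}, count $2n(n-1)-(n^2-1)=(n-1)^2$ non-tree edges, and divide using $(n-1)^2\ge n^2/4$ for $n\ge 2$. Your remark that this count equals the number of bounded faces is a sound sanity check, though the argument does not need it.
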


\begin{proof}
Observe that the number of fundamental cycles in the $n$-grid $G_{n,n}$ is equal to $2n(n-1)-(n^2-1)=(n-1)^2$.
Since it holds that $(n-1)^2\ge n^2/4$ for every $n\ge 2$,
the bound claimed in Corollary~\ref{cor:upper} now follows from Theorem~\ref{thm:upper}.
\end{proof}

\section{Lower bound}
\label{sec:lower}

Before presenting the proof of the main result, we need to introduce additional terminology.
Let $G$ be an $n$-grid and $C$ a cycle of $G$.
Recall that we think of $G$ as embedded in the plane with faces being axis parallel unit squares.
The \emph{width} of the cycle $C$ is the difference between the maximal and minimal $x$-coordinates of the vertices of $C$,
the \emph{height} is the difference between the maximal and minimal $y$-coordinates of the vertices of $C$,
the \emph{perimeter} is equal to sum of twice the width and twice the height;
in other words, the perimeter of $C$ is the perimeter of the smallest axis-parallel box bounding $C$.
Observe that the perimeter of $C$ is a lower bound on the length of $C$,
i.e.~the length of $C$ is at least its perimeter.

An \emph{expanded $n$-grid} is a graph $H$ obtained from the $n$-grid $G_{n,n}$ by duplicating some of its peripheral vertices (possibly the same vertex multiple times) and
adding some edges between duplicated vertices and peripheral vertices (adding an edge joining two duplicated vertices or
two peripheral vertices, including creating parallel edges, is allowed) so that the following holds:
for every $\delta>0$,
there exists a planar embedding of $H$ that can be obtained from $G_{n,n}$ by drawing the duplicated vertices and the added edges
in a way that every duplicated vertex has distance at most $\delta$ from its base vertex (the \emph{base vertex} of $v$
is the vertex of that $v$ is a duplicate).
The grid $G_{n,n}$ is referred to as the \emph{host grid} of $H$.
An example of an expanded $5$-grid can be found in Figure~\ref{fig:e5grid}.
Note that a particular case of an expanded $n$-grid is the $n$-grid itself.

\begin{figure}
\begin{center}
\epsfbox{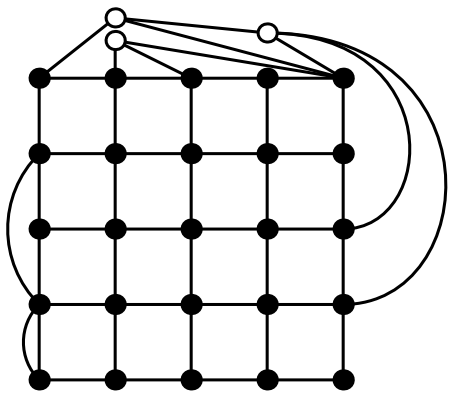}
\end{center}
\caption{An expanded $5$-grid. The duplicate vertices are drawn with empty circles.}
\label{fig:e5grid}
\end{figure}

In an expanded grid, each edge has a length that is defined as follows.
All edges of the host grid have a unit length, and
the length of an edge between two peripheral vertices $v$ and $w$ that is not an edge of the host grid
is the length of the shortest path between $v$ and $w$ using peripheral vertices only.
The length of an edge between a duplicated vertex $v$ and a peripheral vertex $w$
is the length of the shortest path from the base vertex of $v$ to the vertex $w$ using peripheral vertices only, and
the length of an edge between a duplicated vertex $v$ and another duplicated vertex $w$
is the length of the shortest path between the base vertices of $v$ and $w$ using peripheral vertices only.
We use these edge lengths to define the length of paths and cycles in an expanded grid.
The \emph{perimeter} of a cycle $C$ in expanded grid is the perimeter of the smallest axis-parallel box bounding
the vertices of the host grid contained in $C$ and the base vertices of the duplicated vertices contained in $C$.
Observe that the length of $C$ is always at least its perimeter (the edge lengths defined as above are required here).
Finally, we define $\LL^*(T,H)$ for a spanning tree $T$ of an expanded grid $H$
to be the sum of the perimeters of fundamental cycles associated with the edges of the host grid of $H$ that are not contained in $T$.

\begin{figure}
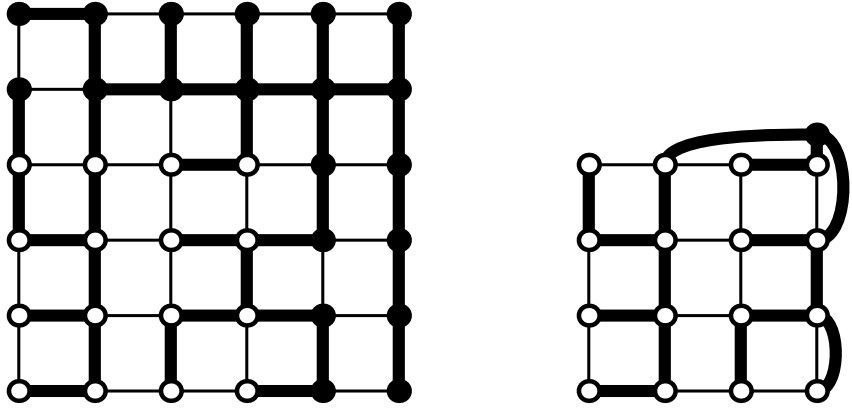

\begin{center}
\epsfbox{matgrid-8.mps}
\hskip 2cm
\epsfbox{matgrid-9.mps}
\end{center}
\caption{A $6$-grid $H$ with a spanning tree $T$ (drawn in the left) and
         the expanded grid $H[G'/T]$ (drawn in the right)
	 where $G'$ is the subgrid with the vertices drawn by empty circles.
	 The edges of the spanning tree $T$ and
	 the edges of the resulting spanning tree in $H[G'/T]$ are drawn in bold.}
\label{fig:HGT}
\end{figure}

Let $H$ be an expanded grid, $T$ a spanning tree of $H$ and $G'$ a subgrid of the host grid of $H$.
We define the expanded grid $H[G'/T]$ as follows.
Let $T'$ be the inclusion-wise minimal subtree of $T$ that contains all vertices of $G'$ and
suppress all vertices of degree two in $T'$ that are not contained in $G'$.
Each branching vertex $v$ of $T'$ that is not contained in $G'$
is considered to be a duplicate of the (unique) peripheral vertex of $G'$ that
is closest to the vertex $v$ if $v$ is a vertex of the host grid of $H$ or
that is closest to the base vertex of $v$ if $v$ is not a vertex of the host grid of $H$.
See Figure~\ref{fig:HGT} for illustration.
Note that the edges of $T'$ that are not edges of the host grid of $H$
join peripheral vertices of $G'$ and duplicated vertices only, and
the tree $T'$ is a spanning tree of $H[G'/T]$.
Also observe that planar embeddings of $H$ yield planar embeddings of $H[G'/T]$ as
in the definition of an expanded grid;
in particular, $H[G'/T]$ is an expanded grid with well-defined duplicate vertices.
Finally, observe if $e$ is an edge of the grid $G'$ that is not contained in the spanning tree $T'$,
then the perimeter of the fundamental cycle associated with $e$ in $T'$ (when the distances are measured in $H[G'/T]$)
is at most the perimeter of the fundamental cycle associated with $e$ in $T$ (when the distances are measured in $H$).

The following lemma contains the core argument giving the desired lower bound.

\begin{lemma}
\label{lem:lower}
Let $H$ be an expanded $n$-grid where $n$ is a power of five, and let $T$ be a spanning tree of $H$.
It holds that $\LL^*(T,H)$ is at least $\frac{2}{25}n^2\log_5 n$.
\end{lemma}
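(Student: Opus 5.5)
We argue by induction on $n$, where $n=5^k$. The case $n=1$ is trivial since the bound is $0$. For the induction step, let $m=n/5$ and partition the host grid $G_{n,n}$ into $25$ subgrids $G_1,\dots,G_{25}$, each an $m$-grid, arranged in a $5\times 5$ pattern. The edges of the host grid lying inside some $G_i$ are handled by induction. For each $i$ we form the expanded grid $H[G_i/T]$ with spanning tree $T_i$, the minimal subtree of $T$ restricted as in the definition. By the observation preceding the lemma, the perimeter of the fundamental cycle of a non-tree edge $e$ of $G_i$ with respect to $T_i$ is at most its perimeter with respect to $T$. Hence the contribution of edges inside the blocks is at least
\[\sum_{i=1}^{25}\LL^*(T_i,H[G_i/T])\ge 25\cdot\frac{2}{25}m^2\log_5 m=\frac{2}{25}n^2(\log_5 n-1).\]
The two quantities compared here agree only on the blocks' edges. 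The remaining terms in $\LL^*(T,H)$, namely the edges between blocks and the cycles' excess, are nonnegative.

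It therefore suffices to find an additional contribution of $\frac{2}{25}n^2$, which we extract by improving the per-block estimate. The cleanest route is to show that for at least a constant fraction of the blocks, there are many non-tree edges whose fundamental cycles are large, of perimeter $\Omega(m)$, and then charge these. A better route is to prove directly that $\Omega(m^2)$ extra perimeter arises, measured at the scale of $G$ rather than $G_i$. Concretely, take the central block $G_{13}$, or more usefully the nine inner blocks. The $2m(m-1)$ edges of the host grid crossing between neighbouring blocks, or better the edges of the middle block, have fundamental cycles in $T$. Consider the $m^2$ faces of a block. By planarity, each non-tree edge $e$ corresponds to a dual edge, and the non-tree edges form a spanning tree of the dual, including the outer face and the faces created by the added edges. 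The fundamental cycle of $e$ encloses exactly the faces in the dual subtree cut off by $e$ on the side away from the outer region. A fundamental cycle enclosing a region containing $s$ grid faces has perimeter at least about $4\sqrt{s}$, and at least twice the width of the enclosed set.

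The key counting step is a standard dual-tree argument. Root the dual tree at the outer face. For each scale, the faces of the inner $3m\times 3m$ region must be connected to the outside through the dual tree, and each dual edge $e$ on such a path separates its subtree of faces. The lower bound in the recursion comes from cycles of size roughly $m$: we need a family of about $m^2/\text{const}$ non-tree edges whose fundamental cycles enclose at least $m^2/\text{const}$ faces, so that their perimeter is $\Omega(m)$. These contribute $\Omega(m^3)$, which is too much. More carefully, I expect to need the following: in a dual tree on a $5m\times5m$ region, the number of dual edges whose subtree has size between $m^2$ and, say, $n^2/2$ is at least of order $n^2/m^2\cdot$ const, and each such edge yields perimeter $\ge m$. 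This gives only $\Omega(n^2/m)$, which is also insufficient. So the real extra term must come from the sum, over all dual edges, of $\sqrt{\text{subtree size}}$ restricted to a scale window $[m^2,25m^2]$.

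Thus the plan is to prove a single-scale estimate: for any spanning tree and any scale, the non-tree edges whose fundamental cycles have perimeter in $[m,5m]$ contribute $\Omega(n^2)$. In the inductive form above, this becomes the statement that the non-tree edges whose cycles are not contained inside a single block's bounding box contribute at least $\frac{2}{25}n^2$. The point to exploit is that the blocks' induced expanded grids already account for cycles at smaller scales, since cycles leaving a block have perimeter at least their distance to the block boundary. The main obstacle is exactly this extra-term bound, because the recursion discards the intra-block cycles' true perimeter in excess of the perimeter within the block. I would first try to show that edges near block boundaries, within distance $m/5$, of the central region must have fundamental cycles whose bounding boxes cross a block boundary. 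The rough counting is as follows: each of the $\Theta(n)$ block boundary lines carries $\Theta(n)$ grid edges, and crossing cycles have perimeter at least $2$ times their extent, which sums to $\Omega(n^2)$. The constant $\frac{2}{25}$ is consistent with an edge-count-times-length estimate of this type.
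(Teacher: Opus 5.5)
Your reduction matches the paper's. Tiling the host grid into $25$ blocks of side $m=n/5$ and applying induction to the expanded grids $H[G_i/T]$ gives $\frac{2}{25}n^2(\log_5 n-1)$. Everything then rests on finding an additional $\frac{2}{25}n^2=2m^2$, made up of the perimeters of inter-block non-tree edges plus the \emph{excess} of intra-block non-tree edges (perimeter in $H$ minus perimeter in $H[G_i/T]$). That step is the whole content of the lemma, and the proposal does not prove it. You abandon two of your own intermediate estimates, and the route you finally settle on fails:
\begin{itemize}
\item The $5\times 5$ tiling has only $8$ internal block-boundary lines, not $\Theta(n)$. They carry about $8n$ inter-block edges, and a spanning tree can contain all but $16$ of them (one edge of each $4$-cycle at an internal corner must be omitted). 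So inter-block edges need not contribute more than $O(n)$.
\item The claim that edges within distance $m/5$ of a block boundary must have fundamental cycles leaving their block is false. If $T$ restricted to every block is a spanning tree of that block, every intra-block fundamental cycle stays inside its block.
\item For intra-block edges only the excess counts, not the full perimeter. A cycle that leaves its block by a single row may lose only $2$ when passed to $H[G_j/T]$.
\item The ``single-scale estimate'' is false as stated. For the comb (all vertical edges plus the bottom row), the horizontal non-tree edge in row $y$ has perimeter $2y$, so for bounded $m$ the edges with perimeter in $[m,5m]$ contribute only $O(n)$.
\end{itemize}

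The idea you are missing is topological. It needs only $m$ edges, each with excess $2m$, rather than $\Theta(n^2)$ edges.
\begin{itemize}
\item Let $C_1,\dots,C_m$ be the concentric cycles of the host grid formed by the vertices at distance $0,\dots,m-1$ from its boundary. They are disjoint and lie in the $16$ outer blocks.
\item Fix a point $z_0$ inside a face of the central block $G_{13}$. Replacing each non-tree edge $e$ of $C_i$ by its tree path $P_e$ gives a closed walk in $T$, which is contractible in the plane minus $z_0$, whereas $C_i$ winds around $z_0$.
\item If every $P_e$ stayed in the band of $e$ (the top, bottom, left or right $2m$ rows or columns, which avoid $z_0$), each $P_e$ would be homotopic to $e$, and the walk would be homotopic to $C_i$. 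Hence some non-tree edge $e_i$ of $C_i$ has a fundamental cycle whose bounding box reaches a row through $G_{13}$ (if $e_i$ is in the top or bottom $m$ rows) or a column through $G_{13}$ (if $e_i$ is in the outer $m$ columns).
\item That box extends at least $m$ beyond the band of blocks containing $e_i$. So either $e_i$ is an inter-block edge with perimeter at least $2m$, or its perimeter drops by at least $2m$ when passing to $H[G_j/T]$ for the block $G_j$ containing $e_i$.
\end{itemize}
The $e_i$ are distinct because the $C_i$ are disjoint, so they give the extra $m\cdot 2m=\frac{2}{25}n^2$. With $n=5^k$, the identity $25\cdot\frac{2}{25}m^2(k-1)+2m^2=\frac{2}{25}n^2k$ closes the induction exactly.
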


\begin{proof}
We proceed by induction.
Throughout the proof, we refer to edges not contained in the considered spanning tree as \emph{non-tree edges}.
If $n=1$, the bound holds trivially.
As the base of the induction, we also analyze the case $n=5$ in order to avoid any pathological cases in the induction step.
If $n=5$,
the host grid has at least $2\cdot 5\cdot 4-24=16$ non-tree edges and
so the sum of the perimeters of fundamental cycles associated with the non-tree edges of the host grid
is at least $32$, which is larger than $\frac{2}{25}n^2\log_5 n=2$.

\begin{figure}
\begin{center}
\epsfbox{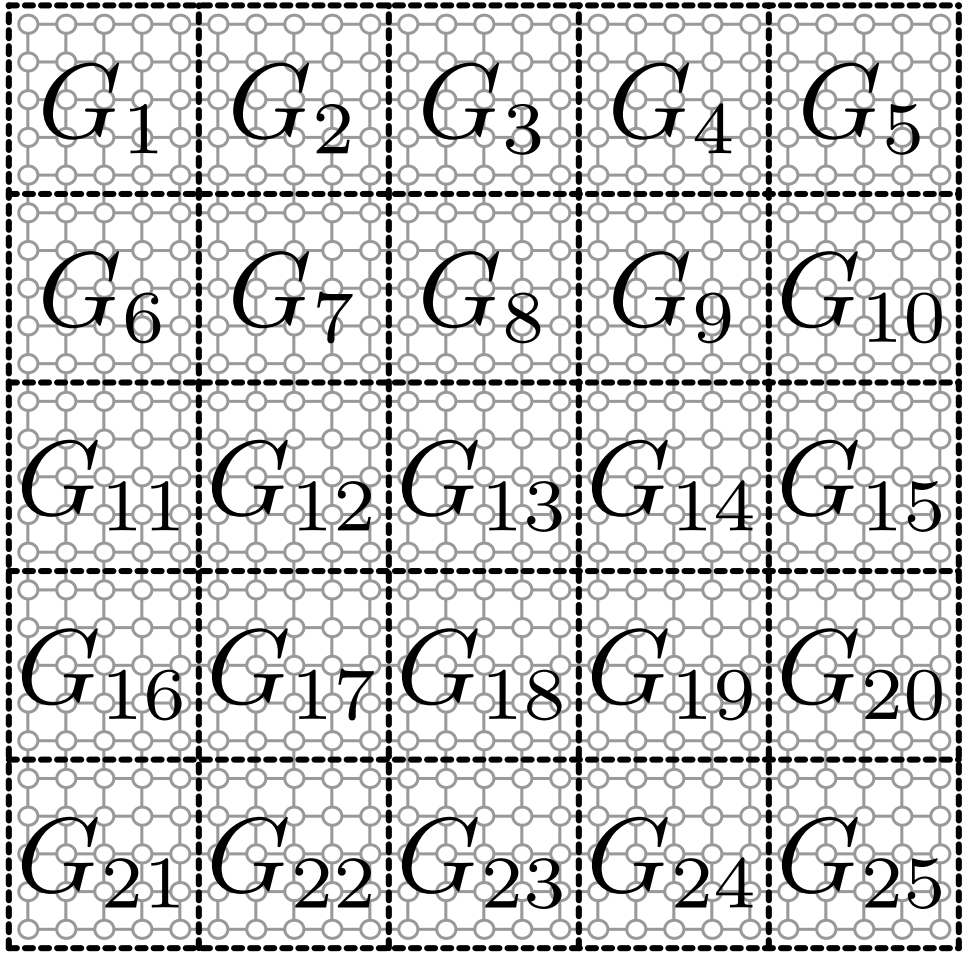}
\end{center}
\caption{The tiling of the host $25$-grid with the $5$-grids $G_1,\ldots,G_{25}$
         as considered in the proof of Theorem~\ref{thm:upper}.}
\label{fig:G25}
\end{figure}

We now present the induction step. Let $n=5^k$ where $k\ge 2$.
Fix an expanded $n$-grid $H$ and a spanning tree $T$ of $H$.
Let $G_1,\ldots,G_{25}$ be the unique collection of $5^{k-1}$-grids that tile the host grid of $H$ and
that are indexed as in Figure~\ref{fig:G25}.
We apply induction to the $25$ expanded $5^{k-1}$-grids $H[G_i/T]$, $i\in [25]$,
together with the spanning trees obtained from $T$ when creating $H[G_i/T]$.
By induction,
the sum of the perimeters of fundamental cycles associated with the non-tree edges of the host grid of $H[G_i/T]$ for each $i\in [25]$
is at least
\begin{equation}
\frac{2}{25} 5^{2(k-1)}\log_5 5^{k-1}=\frac{2}{25} 5^{2(k-1)}(k-1).\label{eq:low1}
\end{equation}
We next show that there are at least $5^{k-1}$ non-tree edges $e$ of the host grid of $H$ such that
\begin{itemize}
\item the edge $e$ is not contained in any $H[G_i/T]$, $i\in [25]$, and
      the perimeter of the fundamental cycle associated with $e$ is at least $2\cdot 5^{k-1}$, or
\item the edge $e$ is contained in $H[G_i/T]$ for some $i\in [25]$ and
      the perimeter of the fundamental cycle associated with $e$ in $H[G_i/T]$
      is smaller by at least $2\cdot 5^{k-1}$ than the perimeter of fundamental cycle associated with $e$ in $H$.
\end{itemize} 

Let $V_0$ be the vertices contained in the subgrid $G_{13}$,
$V_1$ the vertices contained in one of the subgrids $G_7$, $G_8$, $G_9$, $G_{12}$, $G_{14}$, $G_{17}$, $G_{18}$ and $G_{19}$, and
$V_2$ the remaining vertices of the host grid,
where the numbering of the subgrids is as in Figure~\ref{fig:G25}.
Further,
let $C_i$, $i\in\left[5^{k-1}\right]$, be the cycle of the host grid formed
by vertices at distance exactly $i-1$ from a peripheral vertex;
in particular, the cycle $C_1$ is formed by the peripheral vertices.
Note that $C_i$, $i\in\left[5^{k-1}\right]$, are concentric cycles (see Figure~\ref{fig:Ci} for illustration),
which are formed by the vertices of $V_2$.

\begin{figure}
\begin{center}
\epsfbox{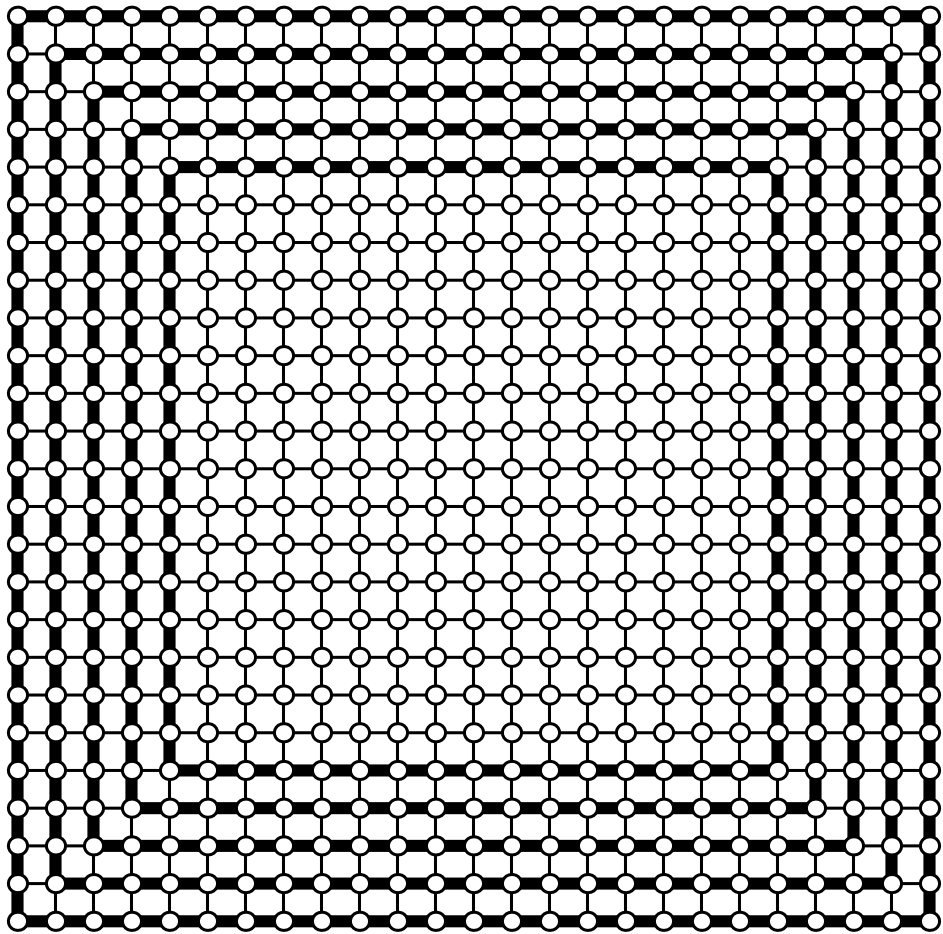}
\end{center}
\caption{The cycles $C_1,\ldots,C_5$ in the host $25$-grid as defined in the proof of Theorem~\ref{thm:lower};
         the cycles are drawn using the bold edges.}
\label{fig:Ci}
\end{figure}

Fix $i\in\left[5^{k-1}\right]$.
For every non-tree edge $e$ of $C_i$, let $P_e$ be the path between the end vertices of $e$ contained in the spanning tree $T$;
note that the edge $e$ and the path $P_e$ form the fundamental cycle associated with $e$.
Let $C'_i$ be the closed walk obtained from $C_i$ by replacing each non-tree edge $e$ with $P_e$.
Recall that the expanded $5^k$-grid $H$ is equipped with an embedding in the plane.
Let $z_0$ be any point of the plane contained inside one of the faces of $G_{13}$ and
consider the plane punctured by removing the point $z_0$.
Since $T$ is a spanning tree and $C'_i$ is a closed walk in $T$,
the closed walk $C'_i$ is contractible in the punctured plane.

We say that a path $P_e$ is \emph{long}
if either the edge $e$ is contained in the first $5^{k-1}$ rows or the last $5^{k-1}$ rows and
the path $P_e$ contains a vertex of a row passing through $V_0$, or
the edge $e$ is contained in the first $5^{k-1}$ columns or the last $5^{k-1}$ columns and
the path $P_e$ contains a vertex of a column passing through $V_0$.
If none of the paths $P_e$ for non-tree edges $e$ of $C_i$ is long,
then the closed walk $C'_i$ would be homotopic to $C_i$,
which is impossible as $C_i$ is not contractible in the punctured plane.
Hence, there exists a non-tree edge $e$ of $C_i$ such that the path $P_e$ is long;
let $e_i$ be any such edge.
If $e_i$ is not contained in any of the grids $G_j$, $j\in [25]$,
then the perimeter of the fundamental cycle associated with $e_i$ is at least $2\cdot 5^{k-1}$, and
the edge $e_i$ has the first property displayed above.
On the other hand,
if $e_i$ is contained in $G_j$ for some $j\in [25]$,
then the perimeter of the fundamental cycle associated with $e_i$ in $H[G_j/T]$
is smaller by at least $2\cdot 5^{k-1}$, and
so the edge $e_i$ has the second property displayed above.
We conclude that there are $5^{k-1}$ non-tree edges, namely, $e_1,\ldots,e_{5^{k-1}}$, such that
each of them has one of the two properties displayed above.

We can now establish the desired lower bound on the sum of the perimeters of fundamental cycles associated with the non-tree edges of the host grid of $H$.
Recall that if an non-tree edge $e$ of the host grid of $H$ is contained in $G_i$,
then the perimeter of the fundamental cycle associated with $e$ in $H$
is at least the perimeter of the fundamental cycle associated with $e$ in $H[G_i/T]$ and
the sum of the perimeters of fundamental cycles associated with such non-edges contained in $G_i$
is at least \eqref{eq:low1}.
Since there are at least $5^{k-1}$ non-tree edges $e$ of the host grid of $H$ such that
either $e$ is not contained in any $H[G_i/T]$, $i\in [25]$, and
the perimeter of the fundamental cycle associated with $e$ is at least $2\cdot 5^{k-1}$, or
the edge $e$ is contained in $H[G_i/T]$ and
the perimeter of the fundamental cycle associated with $e$ got shrunk by at least $2\cdot 5^{k-1}$,
we obtain that $\LL^*(T,H)$ is at least
\[25\cdot\frac{2}{25} 5^{2(k-1)}(k-1)+2\cdot 5^{k-1}\cdot 5^{k-1}=\frac{2}{25}5^{2k}k,\]
which is equal to $\frac{2}{25}n^2\log_5 n$ as desired.
\end{proof}

We are now ready to prove the main result of this section.

\begin{theo}
\label{thm:lower}
Let $H$ be an expanded $n$-grid and $T$ a spanning tree of $H$.
It holds that $\LL^*(T,H)$ is at least $\frac{2}{625}n^2\left\lfloor\log_5 n\right\rfloor$.
\end{theo}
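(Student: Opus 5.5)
The plan is to reduce Theorem~\ref{thm:lower} to Lemma~\ref{lem:lower} by passing to a largest power-of-five subgrid, using the subgrid operation $H[G'/T]$ together with the monotonicity of perimeters under that operation. Let $k=\lfloor\log_5 n\rfloor$. If $k=0$ the bound is $0$ and there is nothing to prove, so assume $k\ge 1$ and set $m=5^k$. Then $m\le n<5m$.

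First choose disjoint $m$-subgrids of the host grid $G_{n,n}$. Since $\lfloor n/m\rfloor\ge 1$, we can place $\lfloor n/m\rfloor^2\ge 1$ pairwise vertex-disjoint $m$-grids $G'_1,\dots,G'_t$ inside $G_{n,n}$, aligned in a block pattern. Using only one of them already suffices, since $n<5m$ gives $n^2<25m^2$, hence $m^2>n^2/25$.

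Next, for each chosen subgrid $G'_j$, form the expanded $m$-grid $H[G'_j/T]$ with its spanning tree $T'_j$. Lemma~\ref{lem:lower} applies because $m$ is a power of five, and gives
\[\LL^*(T'_j,H[G'_j/T])\ge \tfrac{2}{25}m^2k.\]
By the observation preceding Lemma~\ref{lem:lower}, each non-tree edge $e$ of $G'_j$ satisfies two facts:
\begin{itemize}
\item $e$ is also a non-tree edge of $T$ in the host grid of $H$;
\item the perimeter of its fundamental cycle with respect to $T$ in $H$ is at least its perimeter with respect to $T'_j$ in $H[G'_j/T]$.
\end{itemize}
The first fact holds because $T'_j$ is a subtree of $T$ (with suppressed vertices), so its edges inside $G'_j$ are edges of $T$.

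Finally, sum over edges. Because the subgrids are vertex-disjoint, their edge sets are disjoint, and $\LL^*(T,H)$ sums over all non-tree host-grid edges with nonnegative terms. Therefore
\[\LL^*(T,H)\ge \sum_j \tfrac{2}{25}m^2k\ge \tfrac{2}{25}m^2k>\tfrac{2}{25}\cdot\tfrac{n^2}{25}k=\tfrac{2}{625}n^2\lfloor\log_5 n\rfloor.\]

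The only point needing care is justifying that the edges counted in $H[G'_j/T]$ correspond injectively to non-tree host-grid edges of $H$ with no smaller perimeter. I expect this to be the main obstacle. It follows directly from the construction of $H[G'/T]$: the host grid of $H[G'/T]$ is exactly $G'$, and the fundamental cycle of $e$ in $T'$ is obtained from the one in $T$ by suppression and reassignment of duplicates to nearby peripheral vertices of $G'$, which can only shrink the bounding box. That leaves only the arithmetic $5^k>n/5$.
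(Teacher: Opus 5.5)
Your proposal is correct and is essentially the paper's proof: pass to an $m$-subgrid with $m=5^{\lfloor\log_5 n\rfloor}>n/5$, apply Lemma~\ref{lem:lower} to $H[G'/T]$, and use that perimeters of fundamental cycles do not increase under the operation $H[G'/T]$ (the extra disjoint subgrids are unnecessary, though they would improve the constant). One small slip is that your justification that a non-tree edge of $T'_j$ inside $G'_j$ is a non-tree edge of $T$ argues the converse direction; what you actually need is that every edge of $T$ with both ends in $G'_j$ lies in $T'_j$, which holds because $T'_j$ is a subtree of $T$ containing both endpoints and the unique path of $T$ between them is that edge.
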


\begin{proof}
Fix an expanded $n$-grid $H$ and a spanning tree $T$ of $H$.
Let $N$ be the largest power of five smaller or equal to $n$, and
let $G$ be any $N$-grid that is a subgrid of the host grid of $H$.
Note that $N\ge n/5$ and $\log_5 N=\left\lfloor\log_5 n\right\rfloor$.

Consider the expanded $N$-grid $H[G/T]$ and
let $T'$ be the spanning tree of $H[G/T]$ resulting from $T$.
By Lemma~\ref{lem:lower}, it holds that
\[\LL^*\left(T',H[G/T]\right)\ge\frac{2}{25}N^2\log_5 N.\]
Since for every non-tree edge $e$ of the host grid of $H[G/T]$,
the perimeter of the fundamental cycle associated with $e$ in $H[G/T]$
is at most the perimeter of the fundamental cycle associated with $e$ in $H$,
we obtain that
\[\LL^*(T,H)\ge \LL^*\left(T',H[G/T]\right)\ge\frac{2}{625}n^2\left\lfloor\log_5 n\right\rfloor.\]
This concludes the proof of the theorem.
\end{proof}

Theorem~\ref{thm:lower} yields the following:

\begin{cor}
\label{cor:lower}
Let $T$ be a spanning tree of an $n$-grid $G_{n,n}$, $n\ge 2$.
The average length of a fundamental cycle with respect to $T$ is at least $\frac{2}{625}\left\lfloor\log_5 n\right\rfloor$.
\end{cor}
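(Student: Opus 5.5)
The plan is to apply Theorem~\ref{thm:lower} with $H=G_{n,n}$ itself, which is a special case of an expanded $n$-grid, and then divide by the number of fundamental cycles. For a spanning tree $T$ of $G_{n,n}$, every edge of the host grid not in $T$ is a non-tree edge of $G_{n,n}$. Moreover, in $G_{n,n}$ all edges have unit length, so the length of each fundamental cycle is at least its perimeter, as observed at the start of Section~\ref{sec:lower}. Summing over all non-tree edges gives
\[\LL(T,G_{n,n})\ge \LL^*(T,G_{n,n})\ge\frac{2}{625}n^2\left\lfloor\log_5 n\right\rfloor.\]

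Next, I count the fundamental cycles. As in the proof of Corollary~\ref{cor:upper}, there are exactly $2n(n-1)-(n^2-1)=(n-1)^2$ of them. Since $(n-1)^2\le n^2$, the average length is
\[\frac{\LL(T,G_{n,n})}{(n-1)^2}\ge\frac{\LL(T,G_{n,n})}{n^2}\ge\frac{2}{625}\left\lfloor\log_5 n\right\rfloor.\]
The hypothesis $n\ge 2$ only ensures that $(n-1)^2>0$, so that the average is defined.

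I do not expect any real obstacle. The one point to check is that the perimeter lower-bounds the length, and in the plain grid this is immediate: a cycle must traverse its bounding box horizontally twice and vertically twice. The inequality also goes in the convenient direction, since dividing by $(n-1)^2$ rather than by $n^2$ only increases the average.
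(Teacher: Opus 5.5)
Your proposal is correct and is essentially the paper's proof. You apply Theorem~\ref{thm:lower} with $H=G_{n,n}$, use that each fundamental cycle is at least as long as its perimeter, and divide by the $(n-1)^2$ non-tree edges using $n^2/(n-1)^2\ge 1$; you also make explicit the choice $H=G_{n,n}$, which the paper leaves implicit.
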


\begin{proof}
Fix an $n$-grid $G_{n,n}$ and a spanning tree $T$ of $G_{n,n}$.
By Theorem~\ref{thm:lower}, it holds that
\[\LL^*(T,H)\ge\frac{2}{625}n^2\left\lfloor\log_5 n\right\rfloor.\]
Since the grid $G_{n,n}$ has $2n(n-1)-(n^2-1)=(n-1)^2$ non-tree edges and
the length of a cycle is at least its perimeter,
we obtain that the average length of a fundamental cycle with respect to $T$
is at least
\[\frac{1}{(n-1)^2}\cdot \frac{2}{625}n^2\left\lfloor\log_5 n\right\rfloor\ge\frac{2}{625}\left\lfloor\log_5 n\right\rfloor.\]
\end{proof}

\bibliographystyle{bibstyle}
\bibliography{matgrid}

\end{document}